\newcommand{\beq}{\begin{equation}}
\newcommand{\eeq}{\end{equation}}
\newcommand{\beqs}{\begin{eqnarray*}}
	\newcommand{\eeqs}{\end{eqnarray*}}
\newcommand{\beqn}{\begin{eqnarray}}
\newcommand{\eeqn}{\end{eqnarray}}
\newcommand{\beqa}{\begin{array}}
	\newcommand{\eeqa}{\end{array}}
\newcommand{\sq}{{\sqrt{-1}}}
\newcommand{\bC}{{\mathbb C}}
\newcommand{\bR}{{\mathbb R}}
\newcommand{\lp}{\left(}
\newcommand{\rp}{\right)}
\newcommand{\lf}{\left[}
\newcommand{\rf}{\right]}
\newcommand{\ls}{\left\{}
\newcommand{\rs}{\right\}}
\newcommand{\p}{\partial}
\newcommand{\eps}{\varepsilon}
\newcommand{\pbp}{\p\bar \p}
\newcommand\C{{\mathbb C}}
\DeclareMathOperator{\real}{Re}
\newtheorem{prop}{Proposition}[section]
\newtheorem{theo}[prop]{Theorem}
\newtheorem{lem}[prop]{Lemma}
\newtheorem{rem}[prop]{Remark}
\newtheorem{ex}[prop]{Example}
\newtheorem{defi}[prop]{Definition}
\begin{document}
 
\title[Singular Solutions to the Complex Monge-Amp\`ere Equation]
{Singular Solutions to the Complex Monge-Amp\`ere equation}

\author{Jiaxiang Wang}
\address [Jiaxiang Wang] {School of Mathematical Sciences and LPMC, Nankai University, Tianjin, 300071, P. R. China}
\email{wangjx\underline{ }manifold@126.com}

\author{Wenlong Wang}
\address [Wenlong Wang] {School of Mathematical Sciences and LPMC, Nankai University, Tianjin, 300071, P. R. China}
\email{wangwl@nankai.edu.cn}

\thanks{Jiaxiang Wang is partially supported by the Fundamental Research Funds for the Central Universities Nankai University (050-63241427). Wenlong Wang is partially supported by NSFC 12071338, 12001292, and the Fundamental Research Funds for the Central Universities Nankai University (050-63243065).}

\begin{abstract}
We present an explicit pluripotential and viscosity solution to the complex Monge-Amp\`ere equation with constant right-hand side on $\mathbb D\times\mathbb C^{n-1}\,(n\geq 2)$, which lies merely in $W^{1,2}_{loc}\cap W^{2,1}_{loc}$ and is not even Dini continuous. Additionally, we exhibit two families of explicit entire toric solutions on $\mathbb C^n$ with continuous H\"older exponent $\alpha\in(0,1)$ and $W^{2,p}_{loc}$ exponent $p\in (1,2)$.

\vspace{0.15cm}
\noindent{\bfseries Key words:}\ complex Monge-Amp\`ere equation; singular solution; Dini continuity 

\vspace{0.15cm}
\noindent{\bfseries MSC2020:}\  32W20\,$\cdot$\,35B65
\end{abstract}
\maketitle

\section{Introduction}

Complex Monge-Amp\`ere equations play an important role in complex analysis and geometry. There are two fundamental methods for generalizing the notion of solution to functions with low regularity. 
One is the pluripotential approach founded by Bedford and Taylor \cite{BT}, while the other is the viscosity approach introduced by Crandall and Lions for nonlinear PDEs \cite{CL} (see the well-known survey \cite{CIL}) and developed for the complex Monge-Amp\`ere equation in \cite{EGZ,GLZ,HL09,HL13,WangY}. 

The regularity of such generalized solutions is a central topic in the study of the complex Monge-Amp\`ere equation and has been extensively researched. In this note, we investigate the counterpart of this subject: how singular a pluripotential and viscosity solution can be. Our focus lies on examining the complex Monge-Amp\`ere equation defined on a domain of $\mathbb C^n$ in its most elementary form:
\begin{equation}\label{maineq}
\det\left(\partial\bar\partial u\right)=1,
\end{equation}
where $\partial\bar\partial u$ denotes the complex Hessian of $u$. It remains an open question whether a generalized solution possesses interior H\"older regularity. For two-dimensional solutions, even the validity of the interior gradient estimate is unknown. 

We first show that a pluripotential and viscosity solution to \eqref{maineq} does not need to be Dini continuous. 
\begin{theo}\label{main1}
There exist pluripotential and viscosity solutions to \eqref{maineq} on $\mathbb D\times\mathbb C^{n-1}$ that lie merely in $W^{1,2}_{loc}\cap W^{2,1}_{loc}$, but not in $W^{1,p}_{loc}$ for any $p>2$ or $W^{2,q}_{loc}$ for any $q>1$. Moreover, these solutions are not even Dini continuous. 
\end{theo}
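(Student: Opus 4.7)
The plan is to search for $U(1)\times U(n-1)$-invariant solutions to \eqref{maineq}, writing $u(z)=F(|z_1|^2,|z'|^2)$ with $s=|z_1|^2$, $t=|z'|^2$. A direct calculation with the block structure of $[u_{j\bar k}]$ (Schur complement) converts $\det(\partial\bar\partial u)=1$ into
\[
F_t^{\,n-2}\bigl[(F_t+tF_{tt})(sF_s)_s-st\, F_{st}^{2}\bigr]=1.
\]
The simplest ansatz $F(s,t)=f(s)+g(s)t$ separates this into the system $g(s)\bigl(sg'(s)\bigr)'=s\,g'(s)^{2}$ and $g(s)^{n-1}\bigl(sf'(s)\bigr)'=1$. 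The first ODE forces $g(s)=A s^{\alpha}$ for constants $A>0,\alpha\in\mathbb{R}$, and the second then determines $f$ explicitly: a pure power of $s$ when $\alpha(n-1)\neq 1$, and $f(s)=(\log s)^{2}/(2A^{n-1})$ at the critical value $\alpha=1/(n-1)$.

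This yields a one-parameter family of explicit solutions $u_{\alpha}$. For $0<\alpha<1/(n-1)$ these are H\"older continuous, hence Dini continuous; for $\alpha\geq 1/(n-1)$ the function $f$ fails to be upper semi-continuous, so $u_{\alpha}$ is no longer plurisubharmonic. Neither range alone yields Theorem \ref{main1}. My strategy for the borderline case is to transfer the logarithmic singularity onto a term diverging to $-\infty$ at $\{z_1=0\}$. Concretely, I would try augmenting a subcritical $u_{\alpha}$ by an additive psh summand with a $\log\log$-type pole along $\{z_1=0\}$, chosen so that its Bedford--Taylor Monge-Amp\`ere contribution is supported in a set of zero Lebesgue measure and hence does not alter the MA mass. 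An alternative route is to take the upper semi-continuous regularisation of $\sup_{\alpha}u_{\alpha}$ over an appropriate one-parameter subfamily; the envelope is automatically psh and a viscosity subsolution, and by comparison with a matching supersolution drawn from the same family it should coincide with the Perron solution to \eqref{maineq}.

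The main obstacle, in my view, is producing an explicit $u$ that \emph{simultaneously} (i) satisfies $(\partial\bar\partial u)^n=d\mathrm{Vol}$ on all of $\mathbb{D}\times\mathbb{C}^{n-1}$ in both the pluripotential and the viscosity sense, with no spurious Dirac or subvariety contributions from the singular term, (ii) is upper semi-continuous with a modulus of continuity at $z_1=0$ slow enough to violate the Dini condition (for instance of order $(\log|z_1|^{-1})^{-\gamma}$ or $\log\log|z_1|^{-1}$), and (iii) has gradient and Hessian precisely at the borderline of $L^{2}$ and $L^{1}$, respectively. Once the explicit $u$ is in hand, the Sobolev memberships $u\in W^{1,2}_{\mathrm{loc}}\cap W^{2,1}_{\mathrm{loc}}$ and exclusions $u\notin W^{1,p}_{\mathrm{loc}}$ for $p>2$, $u\notin W^{2,q}_{\mathrm{loc}}$ for $q>1$, reduce to direct weighted-integral computations in polar coordinates around $\{z_{1}=0\}$, with the log factors in $u$ producing the sharp borderline scalings. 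Verifying the viscosity equation at the singular locus reduces to checking standard $C^{2}$ test-function inequalities from above and below, which is routine given the explicit formula.
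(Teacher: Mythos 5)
Your reduction of the $U(1)\times U(n-1)$-invariant ansatz $F(s,t)=f(s)+g(s)t$ to the ODE system is correct, but as you yourself observe it only produces the H\"older-continuous family (this is essentially Example \ref{exa-3} of the paper, i.e.\ Theorem \ref{main2}, with $g(s)=s^{\gamma/(2(n-1))}$), and the borderline case $\alpha=1/(n-1)$ gives $f\sim(\log s)^2\to+\infty$, destroying plurisubharmonicity. The gap is that neither of your two proposed repairs can close this. The key idea in the paper is to break the $U(n-1)$-symmetry and use a Pogorelov--Warren type ansatz depending on $\mathrm{Re}\,z'$ rather than $|z'|$: namely $u=a(s)\,|\mathrm{Re}\,z'|^2+b(s)$ with $s=|z_1|^2$. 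Then $u_{2\bar2}=a/2$ and the same separation yields the coefficient ODE $a\,(sa')'=2s(a')^2$, whose solutions are $a(s)=A/(-\log s-C)$ rather than powers of $s$. This is exactly what produces $u=-|\mathrm{Re}\,z_2|^2/\log|z_1|-2|z_1|^2(\log|z_1|-1)$, whose coefficient tends to $0$ (so $u$ stays bounded and psh) with modulus of continuity of order $1/|\log r|$, hence non-Dini, and with $|\nabla u|\sim|z_1|^{-1}\log^{-2}|z_1|$, $|\nabla^2u|\sim|z_1|^{-2}\log^{-2}|z_1|$, giving precisely the borderline $W^{1,2}\cap W^{2,1}$ memberships.

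Concretely, your first repair --- adding a psh summand with a $\log\log$-type pole along $\{z_1=0\}$ whose Monge--Amp\`ere mass lives on a null set --- fails because $(dd^c(u+v))^n$ is not $(dd^cu)^n+(dd^cv)^n$; the mixed terms $(dd^cu)^k\wedge(dd^cv)^{n-k}$ are positive measures that will in general change the absolutely continuous part of the total mass, so the sum no longer solves \eqref{maineq}. Your second repair (an upper envelope of the subcritical family $u_\alpha$) also has no reason to work: the supremum of solutions of a Monge--Amp\`ere equation with fixed right-hand side is only a subsolution, and the comparison with ``a matching supersolution from the same family'' is not available since the $u_\alpha$ do not bracket a non-Dini profile from above and below. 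Finally, even granting an explicit candidate, the pluripotential verification is not automatic: the paper does it by exhibiting an explicit smooth decreasing approximation $u^{\varepsilon}$ with $\det(\partial\bar\partial u^{\varepsilon})\to1$ in $L^1_{loc}$ and invoking the Bedford--Taylor convergence theorem, and the viscosity supersolution property at $\{z_1=0\}$ uses the degeneracy $\varphi_{2\bar2}(p_0)\le0$ of any lower test function; these steps should be carried out rather than deferred as routine.
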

Here, $\mathbb D$ denotes the unit disc centered at the origin in $\mathbb C$. We say a function is Dini continuous if the modulus of continuity $\omega(r)$ satisfies $$\int^1_0\frac{\omega(r)}{r}\,dr<\infty.$$
Clearly, H\"older continuity implies Dini continuity. Therefore, Theorem \ref{main1} indicates that pure interior H\"older estimate fails for the complex Monge-Amp\`ere equation. This theorem also shows that the equation provides almost no regularity improvement for the gradient of the solution without additional regularity assumptions, as a bounded plurisubharmonic function naturally lies in $W^{1,2}_{loc}$. For the Hessian, it remains unclear whether $L^1_{loc}$ is the minimal regularity, as is the case for the real Monge-Amp\`ere equation (cf. \cite{DF,Moo1,Moo2}). 

In complex dimension $2$, the solution can be taken as the following explicit function:  
\begin{ex}\label{exa1}
\begin{equation}\label{count-1}
u(z_1,z_2)=-\frac{\left|\real z_2\right|^2}{\log |z_1|}-2|z_1|^2\left(\log |z_1|-1\right).
\end{equation}
\end{ex}
In Section \ref{sec3}, we will verify that Example \ref{exa1} is a solution to \eqref{maineq} in the pluripotential as well as the viscosity sense, and provide its higher-dimensional generalization.  

Besides Example \ref{exa1}, we also construct examples that are toric and global on $\bC^n$, i.e., functions satisfying
$u(z_1,\cdots,z_n)=u(e^{\sq t_1}z_1,\cdots,e^{\sq t_n}z_n)$ for any $(z_1,\cdots,z_n)\in\mathbb C^n$ and $(t_1,\cdots,t_n)\in\mathbb R^n$.

\begin{theo}\label{main2}
There are families of entire toric solutions on $\mathbb C^n$ with continuous H\"older exponent $\alpha\in(0,1)$ and $W^{2,p}$ exponent $p\in (1,2)$.  
\end{theo}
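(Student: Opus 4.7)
The plan is to use the toric symmetry to reduce equation \eqref{maineq} to a real Monge-Amp\`ere equation and then solve it explicitly as a short sum of exponentials. For $u(z)=U(\log|z_1|^2,\ldots,\log|z_n|^2)$ with $U$ convex on $\mathbb R^n$, one computes $\partial_i\bar\partial_j u=(\bar z_i z_j/|z_i|^2|z_j|^2)\,U_{x_ix_j}$, so \eqref{maineq} becomes the real Monge-Amp\`ere equation
\begin{equation*}
\det D^2 U=e^{x_1+\cdots+x_n}
\end{equation*}
on $\mathbb R^n$. I then try the ansatz $U(x)=\sum_{k=1}^n A_k\exp\bigl(\langle\alpha^{(k)},x\rangle\bigr)$ with $A_k>0$ and exponent vectors $\alpha^{(k)}\in\mathbb R^n_{\geq 0}$. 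Writing $D^2U$ as a sum of $n$ rank-one positive matrices and using the identity $\det\bigl(\sum_{k=1}^n v_kv_k^\top\bigr)=(\det V)^2$ for $V=(v_1,\ldots,v_n)$, one obtains $\det D^2U=\bigl(\prod_k A_kE_k\bigr)(\det M)^2$, where $M=(\alpha_i^{(k)})$ and $E_k=\exp\langle\alpha^{(k)},x\rangle$. Matching to $e^{x_1+\cdots+x_n}$ imposes two algebraic conditions: $M$ is row-stochastic (each row sums to one) and $\prod_k A_k(\det M)^2=1$. Positivity of the $A_k$'s and linear independence of the columns of $M$ force $D^2U>0$, hence $u$ is strictly plurisubharmonic; non-negativity of every $\alpha_i^{(k)}$ ensures that $u$ extends continuously across every coordinate hyperplane, hence to an entire function on $\mathbb C^n$.

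The explicit families come from specific choices of row-stochastic $M$. In complex dimension two, $M=\left(\begin{smallmatrix}a&1-a\\1-a&a\end{smallmatrix}\right)$ with $a\in(\tfrac12,1)$ gives
\begin{equation*}
u_a(z_1,z_2)=\frac{1}{2a-1}\bigl(|z_1|^{2a}|z_2|^{2(1-a)}+|z_1|^{2(1-a)}|z_2|^{2a}\bigr).
\end{equation*}
Near the axis $\{z_1=0\}$ the dominant monomial is $|z_1|^{2(1-a)}|z_2|^{2a}$ and the Hessian entries scale like $|z_1|^{-2a}$, so $u_a$ is H\"older continuous with exponent $\alpha=2(1-a)\in(0,1)$ and lies in $W^{2,p}_{loc}$ precisely for $p<1/a\in(1,2)$; varying $a$ sweeps out every prescribed exponent in the stated ranges. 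For $n\geq 3$ I use the cyclic row-stochastic matrix with $M_{ii}=a$ and $M_{i,i-1}=1-a$ (indices mod $n$, all other entries zero), whose circulant determinant $\prod_{j=0}^{n-1}\bigl(a+(1-a)\omega^j\bigr)$ with $\omega=e^{2\pi\ii/n}$ is non-zero for generic $a\in(0,1)$, and the corresponding solution $u=\sum_k A_k|z_k|^{2a}|z_{k+1}|^{2(1-a)}$ (cyclic indexing) has the same H\"older and $W^{2,p}$ scalings as in dimension two.

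It remains to verify that each $u$ satisfies \eqref{maineq} in both the pluripotential and viscosity senses. Classical smoothness on the open set $\{z_1\cdots z_n\neq 0\}$ handles the equation on the complement of the coordinate axes; those axes form a pluripolar, hence Lebesgue-negligible, set that carries no Bedford-Taylor mass for the bounded continuous plurisubharmonic $u$, so pluripotential validity extends globally after smoothing; viscosity validity across the axes follows from continuity of $u$ together with the classical equation on the dense open complement via the standard touching-quadratic argument. The main obstacle will be engineering $M$ in higher dimensions so that the minimum positive entry in each row can be prescribed to any value in $(0,1)$ while preserving invertibility; this is accomplished by the cyclic two-value-per-row construction above for odd $n$, and by a symmetric block-diagonal modification for even $n$ to avoid the accidental zero of the circulant determinant at $a=\tfrac12$.
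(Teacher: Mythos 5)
Your toric reduction $u(z)=U(\log|z_1|^2,\ldots,\log|z_n|^2)$ giving $\det D^2U=e^{x_1+\cdots+x_n}$, and the sum-of-exponentials ansatz with a row-stochastic exponent matrix $M$, is a genuinely different and more systematic route than the paper's. The paper simply presents two specific families and checks them by hand with smooth approximations $u^\eps$ and Bedford--Taylor convergence; your framework explains \emph{where} such families come from (indeed the paper's own $v$ and $w$ are recovered from the matrices $\bigl(\begin{smallmatrix}\beta/2 & (2-\beta)/2\\ 1/2 & 1/2\end{smallmatrix}\bigr)$ and $\bigl(\begin{smallmatrix}\gamma/2 & (2-\gamma)/2\\ 1 & 0\end{smallmatrix}\bigr)$). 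Your two-dimensional family $u_a=\frac{1}{2a-1}\bigl(|z_1|^{2a}|z_2|^{2(1-a)}+|z_1|^{2(1-a)}|z_2|^{2a}\bigr)$ does solve $\det D^2U=e^{x_1+x_2}$ and has the stated regularity $\alpha=2(1-a)$, $p<1/a$, so varying $a\in(\tfrac12,1)$ sweeps the required ranges. Your pluripotential verification via ``a continuous psh function's Monge--Amp\`ere measure charges no pluripolar set'' is correct and arguably cleaner than the explicit $\det(\partial\bar\partial u^\eps)\to 1$ computation the paper does. Two small remarks on the higher-dimensional construction: the circulant $\prod_j\bigl(a+(1-a)\omega^j\bigr)$ vanishes only when $\omega^j=-a/(1-a)$, which requires $a=\tfrac12$; since you restrict to $a\in(\tfrac12,1)$ the determinant is automatically nonzero for every $n$, so the block-diagonal workaround for even $n$ is unnecessary (though it also works, and actually yields a simpler singular-set structure).

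The one genuine gap is in the viscosity verification. Your claim that ``viscosity validity across the axes follows from continuity together with the classical equation on the dense open complement via the standard touching-quadratic argument'' is not a theorem: continuity plus classical solvability off a null set does \emph{not}, in general, imply the viscosity property at points of the exceptional set. For the subsolution direction you are fine if you invoke Eyssidieux--Guedj--Zeriahi's equivalence between pluripotential and viscosity subsolutions (which the paper cites explicitly); note this step is actually needed here, because your $u_a$ is \emph{exactly} quadratic along the diagonal near the origin, so the ``grows faster than any $C^2$ test function'' argument that eliminates upper test functions along a single axis does not work at $(0,0)$. For the supersolution direction you must argue concretely as the paper does: at $p_0=(0,z_2^0)$, a lower test function $\varphi$ satisfies $\varphi(0,z_2)\le u_a(0,z_2)=0=\varphi(p_0)$, whence $\varphi_{2\bar 2}(p_0)\le 0$; if $\partial\bar\partial\varphi(p_0)\ge 0$ this forces a zero diagonal entry, hence a zero eigenvalue and $\det=0$, so $F\ge 0$ in both cases. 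In your cyclic construction for $n\ge 3$ the restriction $u|_{\{z_1=0\}}$ is not identically zero, so this last step additionally needs the observation that its complex Hessian there is degenerate (it is a sum of fewer than $n-1$ rank-one terms in $n-1$ variables), which again produces a direction where $\partial\bar\partial\varphi(p_0)$ is $\le 0$. These details are straightforward but cannot be waved away as ``standard.''
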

In complex dimension $2$, two families of such solutions can be given as follows.
\begin{ex}\label{exa2}
\begin{equation*}
v(z_1,z_2)=\frac{2}{1-\beta}|z_2|\left(|z_1|^{\beta}+|z_1|^{2-\beta}\right),\quad \beta\in [0,1).
\end{equation*}
\end{ex}
\begin{ex}\label{exa3}
\begin{equation*}
w(z_1,z_2)=|z_1|^{\gamma}|z_2|^2+\frac{4}{(2-\gamma)^2}|z_1|^{2-\gamma},\quad\gamma\in [0,2).
\end{equation*}	
\end{ex}
In Section \ref{sec4}, we will verify that these are solutions to \eqref{maineq} in both the pluripotential and the viscosity sense, and provide higher-dimensional generalizations of Examples \ref{exa2} and \ref{exa3}.  

It is straightforward to see that $v\in C^{\beta}_{loc}\cap W^{1,q}_{loc}\cap W^{2,\,p}_{loc}\,(q<\frac{2}{1-\beta},\,p<\frac{2}{2-\beta})$. Similarly, when $0\leq\gamma\leq 1$, $w\in C^{\gamma}_{loc}\cap W^{1,q}_{loc}\cap W^{2,p}_{loc}\,(q<\frac{2}{1-\gamma},\,p<\frac{2}{2-\gamma})$, and when $1<\gamma<2$, $w\in C^{2-\gamma}_{loc}\cap W^{1,q}_{loc}\cap W^{2,p}_{loc}\,(q<\frac{2}{\gamma-1},\,p<\frac{2}{\gamma})$. Since these two families of solutions include ones with arbitrarily small H\"older exponents, Examples \ref{exa2} and \ref{exa3} show that there is no interior H\"older estimate for equation \eqref{maineq}, even under the toric symmetry and entirety conditions. 

Before our examples, entire solutions on $\mathbb C^n$ with different singularity profiles were constructed by B\l{}ocki \cite{Blo1} and He \cite{He}, respectively. B\l{}ocki's solution is $v(z)=\frac{n}{n-1}\left(1+|z_1|^2\right)|z'|^{2-\frac{2}{n}}$, where $z'=(z_2,\cdots, z_n)$. It is both a pluripotential and viscosity solution to $\det(\partial\bar\partial u)=\left(1+|z_1|^2\right)^{n-2}$ (strong solution for $n\geq 3$). This example, together with B\l{}ocki-Dinew's work \cite{BD}, jointly show that $W_{loc}^{2,n(n-1)}$ is the critical Sobolev regularity needed to guarantee the higher regularity of the solutions. He \cite{He} provided the example $w(z)=n^\frac{2}{n}|z_1|^{\frac{2}{n}}\lp 1+|z'|^2\rp$, which also solves equation \eqref{maineq} both in the viscosity and the pluripotential sense. For dimensions $n\geq 3$, He's solution is non-Lipschitz, indicating that the interior gradient estimate for the complex Monge-Amp\`ere equation fails. Our examples complement this fact for dimension $2$. Note that B\l{}ocki's solution, He's solution and our solutions for $\beta=0$ and $\gamma=1$ essentially coincide in complex dimension $2$. 

These singular solutions, with singularities that extend to the boundary or even to infinity, can potentially be excluded by imposing suitable boundary or global conditions. We review some pertinent results in this context, along with some classical findings.

For a bounded strongly pseudoconvex domain $\Omega\subset\mathbb C^n$, Bedford and Taylor \cite{BT} established the existence and uniqueness of pluripotential solutions for the Dirichlet problem with continuous density and boundary data. For $C^2$ boundary data and density $f$ satisfying $f^\frac{1}{n}\in C^{0,1}(\overline\Omega)$, they further obtained global Lipschitz regularity. Caffarelli, Kohn, Nirenberg, and Spruck \cite{CKNS} proved the existence of classical solutions for the non-degenerate case under suitable conditions on the density and boundary data. This result was later generalized by Guan \cite{Guan}, who extended it to domains admitting a $C^2$ subsolution by refining the barrier arguments. In the degenerate case, Krylov obtained the global $C^{1,1}$ regularity of the solutions, relying on the $C^{3,1}$ boundary data through probabilistic methods in a series of articles (see notably \cite{Krylov}). 

In the seminal work \cite{K98}, Ko\l{}odziej established the $L^{\infty}$ estimate and the existence of a continuous solution under the much milder assumption that the density $f$ belongs to $L^p(\Omega)$ for some $p>1$. For $p=2$, the $L^{\infty}$ estimate and existence theorem are due to Cheng-Yau and Cegrell-Persson \cite{CP}, respectively. Under a mild technical assumption, Guedj, Ko\l odziej, and Zeriahi \cite{GKZ} proved that the solution is H\"older continuous if the boundary data is H\"older continuous and $f\in L^p(\Omega)$ ($p>1$). This result was later generalized and improved in \cite{Cha1,BKPZ,Cha2}.

Li, Li, and Zhang \cite{LLZ} proved interior $C^{\alpha}$ regularity for a solution in a ball with $C^{\alpha}$ density and boundary value. Our Examples \ref{exa2} and \ref{exa3} show that the boundary H\"older regularity condition is necessary, and their result is sharp for the exponent. Cheng and Xu \cite{CX} proved an interior $W^{2,p}$ estimate for the solutions with vanishing boundary values, provided $\Omega$ is close to a ball and $f$ is a small perturbation of a constant.

Examples \ref{exa1}, \ref{exa2} and \ref{exa3} show that there are ``continuous"  singularity profiles lying between the critical regularity needed for self-improvement to smoothness and the near-minimal regularity required for defining the complex Monge-Amp\`ere operator.  Similar “dense” singularity profiles exist for the real Monge-Amp\`ere operator (cf. \cite{CY}).

Compared to the singular solutions of the real Monge-Amp\`ere equation, the much more singular profiles of our solutions underscore the elusiveness of the complex Monge-Amp\`ere operator. A notable characteristic of the singular solutions by B\l{}ocki, He, and ourselves is their explicit form, which may be attributed to the ``double divergence structure" of the complex Monge-Amp\`ere operator in complex dimension two.

Our examples are inspired by Pogorelov's counterexample \cite{Pog} and its various generalizations \cite{Blo1,He,War1,War2}, particularly \cite{War2}, where Warren considered solutions independent of the imaginary part of a complex variable. 

{\it Organization.} In Section \ref{sec2}, we introduce the concepts of pluripotential solution and viscosity solution to the complex Monge-Amp\`ere equation. In Section \ref{sec3}, we prove Theorem \ref{main1} via Example \ref{exa1}, followed by its higher-dimensional generalization. In Section \ref{sec4}, we provide higher-dimensional generalizations of Examples \ref{exa2} and \ref{exa3} and prove Theorem \ref{main2} through them.

\section{Preliminary}\label{sec2}

Let us first briefly recall the construction of Bedford and Taylor \cite{BT}. On $\mathbb C^n$, write $d=\p+\bar \p$ and $d^c=\sqrt{-1}(\bar \p-\p)$ so that $dd^c=2\sqrt{-1}\partial\bar\partial$. For a domain $\Omega\subset \bC^n$, denote by $PSH(\Omega)$ the plurisubharmonic functions on $\Omega$. For $u\in PSH(\Omega)$, $dd^c u$ is a closed positive $(1,1)$ current, i.e., a closed positive $(1,1)$ form with distribution coefficients. If $u$ also lies in $L^\infty_{loc}(\Omega)$, then the complex Monge-Amp\`ere operator can be defined on $u$ inductively as
$$\lp dd^c u\rp^n:=dd^c\left(u\,dd^c\left(u\cdots dd^c  u\right)\right).$$
In particular, if $u\in C^{2}(\Omega)$, then $dd^c u=2\sqrt{-1}\frac{\p^2 u}{\p z^i\p \bar z^j}\,dz^i\wedge d\bar z^j$, and hence 
$$\lp dd^c u\rp^n=4^nn!\det\left(\partial\bar\partial u\right)dV_{\mathbb C^n},$$
where $dV_{\mathbb C^n}$ denotes the standard volume form of $\mathbb C^n$.

In verifying the pluripotential solution in the next two sections, we will use the following weak convergence theorem for the Monge-Amp\`ere measure. 

\begin{lem}[convergence theorem \cite{BT2}]\label{convergence theorem}
Suppose that $\{u_j\}$ is a sequence of functions in $PSH(\Omega)\cap L^{\infty}_{loc}(\Omega)$ that converges decreasingly to $u\in PSH(\Omega)\cap L^{\infty}_{loc}(\Omega)$. Then $\lp dd^c u_j\rp^n$ converges to $\lp dd^c u\rp^n$ in the weak sense of currents on $\Omega$. 
\end{lem}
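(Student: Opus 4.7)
The plan is to prove the convergence theorem by induction on the complex dimension $n$, reducing the statement about the top-order currents $(dd^c u_j)^n$ to one about $(dd^c u_j)^{n-1}$ via integration by parts. For the base case $n=1$, observe that $dd^c u_j$ is a constant multiple of the distributional Laplacian of $u_j$; since $u_1 \geq u_j \geq u$ on any compact subset of $\Omega$, the $u_j$ are uniformly locally bounded, and dominated convergence gives $u_j \to u$ in $L^1_{loc}(\Omega)$. Continuity of distributional differentiation then yields $dd^c u_j \to dd^c u$ in the weak sense of currents.

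For the inductive step, I would fix a smooth compactly supported test function $\chi$ and invoke the Bedford-Taylor definition together with integration by parts (using that $(dd^c u_j)^{n-1}$ is a closed positive current) to write
$$\int \chi \, (dd^c u_j)^n \;=\; \int u_j \, (dd^c u_j)^{n-1} \wedge dd^c \chi.$$
The task then reduces to showing that $u_j (dd^c u_j)^{n-1}$ converges weakly to $u (dd^c u)^{n-1}$ as a current, and I would split the difference as
$$u_j (dd^c u_j)^{n-1} - u (dd^c u)^{n-1} \;=\; (u_j - u)(dd^c u_j)^{n-1} + u\bigl[(dd^c u_j)^{n-1} - (dd^c u)^{n-1}\bigr].$$

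For the first summand, the uniform local boundedness of $\{u_j\}$ combined with the Chern-Levine-Nirenberg inequalities furnishes a uniform mass bound for $(dd^c u_j)^{n-1}$ on $\mathrm{supp}\,\chi$; since $u_j - u$ is upper semicontinuous and decreases to zero, a Dini-type argument---applied after first approximating $u$ uniformly from above by a continuous plurisubharmonic function---forces this term to vanish in the limit. For the second summand the inductive hypothesis delivers the weak convergence $(dd^c u_j)^{n-1} \to (dd^c u)^{n-1}$, but the multiplier $u$ is only bounded plurisubharmonic, so pairing it against a weakly convergent sequence of currents is not automatic. I would replace $u$ by a uniformly close continuous plurisubharmonic approximation, pass to the limit in the approximated problem using the genuine weak convergence, and absorb the error via the same Chern-Levine-Nirenberg mass bound.

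The main obstacle is precisely this last point: weak convergence of currents does not, in general, survive multiplication by a bounded measurable function, so the argument must exploit additional structure of plurisubharmonic functions---either uniform approximability by continuous PSH functions, or quasi-continuity with respect to the Monge-Amp\`ere capacity---to justify passing to the limit in the second summand. Once this analytic ingredient is in place, the induction closes and the weak convergence $(dd^c u_j)^n \to (dd^c u)^n$ follows.
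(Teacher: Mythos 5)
The paper does not prove this lemma---it is quoted from Bedford--Taylor \cite{BT2} and used as a black box---so there is no in-text argument to compare against. Taking your sketch on its own terms: the inductive structure, the integration by parts, and the splitting of the difference are indeed the standard Bedford--Taylor outline, and you have correctly located where the real analytic work lies. However, the first of the two routes you propose for closing the argument does not work: a bounded plurisubharmonic function is not, in general, uniformly approximable from above by a continuous plurisubharmonic function. A uniform limit of continuous functions is continuous, whereas a bounded PSH function can be discontinuous on a dense set; the decreasing regularizations $u*\rho_\eps \searrow u$ exist but converge uniformly only when $u$ is already continuous. Relatedly, in the first summand the function $u_j - u$ is a difference of two upper semicontinuous functions and so is not itself semicontinuous of either sign, so a direct Dini-type argument does not apply. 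The correct ingredient is the one you name second---quasi-continuity of PSH functions with respect to the relative Monge--Amp\`ere capacity, which Bedford and Taylor develop in \cite{BT2} precisely for this purpose: one removes an open set of arbitrarily small capacity on which the Chern--Levine--Nirenberg estimate uniformly controls the mass of $(dd^c u_j)^{n-1}$, so that on the remainder all the relevant functions are continuous and Dini's theorem becomes available. Building the capacity theory is a substantial preliminary, which is why the paper cites the result rather than reproving it; your sketch is a reasonable high-level outline, but as written it rests on an approximation that is false and would need to be replaced wholesale by the capacity/quasi-continuity machinery.
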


Another approach to defining the complex Monge-Amp\`ere operator for non-smooth functions is through the viscosity method. 
\begin{defi}[\cite{CIL,EGZ}]\label{vissol}
Let $\Omega$ be a domain of $\C^n$ and $f$ a nonnegative continuous function on $\Omega$. Denote the space of $n$-th order Hermitian matrices by $\mathcal H_n$ and define $F:\Omega\times\mathcal H_n\rightarrow\mathbb R$ by
\begin{equation*}
F(x,A)=\begin{cases}
f(x)-\det(A),\ \ \ &\text{if }A\ge 0;\\
+\infty,   &\text{otherwise.}
\end{cases}
\end{equation*}
\begin{enumerate}[leftmargin=*]
\item [1.] An upper semi-continuous function $u: \Omega\to \bR$ is said to be a viscosity subsolution to the equation $\det(\pbp u)=f$ if for any $p\in \Omega$ and any $C^2$ function $\varphi$ defined in a neighborhood of $p$ such that $u-\varphi$ attains its local maximum at $p$, there holds 
 $$F\left(\pbp \varphi(p)\right)\le 0.$$
\item [2.] A lower semi-continuous function $u: \Omega\to \bR$ is said to be a viscosity supersolution to the equation $\det(\pbp u)=f$ if for any $p\in \Omega$ and any $C^2$ function $\varphi$ defined in a neighborhood of $p$ such that $u-\varphi$ attains its local minimum at $p$, there holds 
 $$F\left(\pbp \varphi(p)\right)\ge 0.$$
\end{enumerate}

We say $u$ is a viscosity solution to $\det(\pbp u)=f$ if it is both a viscosity subsolution and supersolution to this equation.  
\end{defi}
\begin{rem}
Eyssidieux, Guedj and Zeriahi \cite{EGZ} proved that the subsolutions to $\det(\pbp u)=f$ in the viscosity sense coincide with those in the pluripotential sense; a function $u$ on $\Omega$ is plurisubharmonic if and only if it is a viscosity subsolution to the equation $\det(\pbp u)=0$.
\end{rem}

\section{Counterexamples to the interior Dini continuity}\label{sec3}

In this section, we first prove that Example \ref{exa1} is both a pluripotential and viscosity solution to $\det(\partial\bar\partial u)=1$ on $\mathbb D\times\mathbb C$ and verify its singularity profile stated in Theorem \ref{main1}. Then, we provide its higher-dimensional generalization. 

\begin{proof}[Proof of Theorem \ref{main1}] Recall the expression of Example \ref{exa1}: 
\begin{equation*}
u(z_1,z_2)=-\frac{\left|\real z_2\right|^2}{\log |z_1|}-2|z_1|^2\left(\log |z_1|-1\right).
\end{equation*}
Clearly, $u$ is smooth on $\left(\mathbb D\setminus\{0\}\right)\times\mathbb C$. At $\{0\}\times\mathbb C$, define $u=0$ to ensure continuity. The proof is then divided into three parts. 

{\it 1.\,Verification of pluripotential solution.}  

By Lemma \ref{convergence theorem}, we prove this through constructing smooth approximation. For $\varepsilon\in (0,1)$, define a family of smooth functions
\begin{equation*}
u^{\varepsilon}(z_1,z_2)=-\frac{2\left|\real z_2\right|^2}{\log \lp |z_1|^2+\varepsilon\rp}-(|z_1|^2+\varepsilon)\lf \log \lp |z_1|^2+\varepsilon \rp-2\rf
\end{equation*}
on $\mathbb D_{1-\varepsilon}\times\mathbb C$, where $\mathbb D_{1-\varepsilon}$ denotes the disc of radius $1-\varepsilon$ centered at the origin in $\mathbb C$. For any compact subset $\Omega$ of $\mathbb D\times\mathbb C$, $u^{\varepsilon}$ converges decreasingly and uniformly to $u$ on $\Omega$ as $\varepsilon\to 0$. Next we show that $u^{\varepsilon}$ is plurisubharmonic and $\det\lp \pbp u^{\varepsilon}\rp$ converges to $1$ in $L^1(\Omega)$ as $\varepsilon\rightarrow 0$. For simplicity, we use the notation
$$u^{\varepsilon}_k=\frac{\p u^{\varepsilon}}{\p z_k},\quad u^{\varepsilon}_{\bar k}=\frac{\p u^{\varepsilon}}{\p\bar z_k},\quad u^{\varepsilon}_{i\bar j}=\frac{\p^2 u^{\varepsilon}}{\p z_i\p \bar z_j},\quad \text{etc}.$$
 A straightforward computation yields
 \begin{equation*}
u^{\varepsilon}_{1}=\frac{2\left| \real z_2\right|^2 \bar z_1}{\left(|z_1|^2+\varepsilon\right)\log^2 \lp |z_1|^2+\varepsilon\rp}-\lf \log\lp |z_1|^2+\varepsilon\rp-1\rf \bar z_1,\quad\  u^{\varepsilon}_{2}=-\frac{2\real z_2}{\log \lp |z_1|^2+\varepsilon\rp};
\end{equation*}
\begin{equation*}
\begin{split}
u^{\varepsilon}_{1\bar 1}=&-\frac{4|\real z_2|^2|z_1|^2}{\lp |z_1|^2+\varepsilon\rp^2\log^3\lp |z_1|^2+\varepsilon\rp}-\log\lp |z_1|^2+\varepsilon\rp\\
&+\varepsilon\left[\frac{2|\real z_2|^2}{\lp |z_1|^2+\varepsilon\rp^2\log^2\lp |z_1|^2+\varepsilon\rp}+\frac{1}{|z_1|^2+\varepsilon}\right],
\end{split}
\end{equation*}
\begin{equation*}
u^{\varepsilon}_{1\bar 2}=\frac{2(\real z_2)\cdot \bar z_1}{\lp |z_1|^2+\varepsilon\rp\log^2 \lp |z_1|^2+\varepsilon\rp},\qquad 
u^{\varepsilon}_{2\bar 2}=-\frac{1}{\log\lp |z_1|^2+\varepsilon\rp}.
\end{equation*}
We arrive at 
\begin{equation*}
\det(\partial\bar\partial u^{\varepsilon})= 1-\varepsilon\left[\frac{2|\real z_2|^2}{\lp |z_1|^2+\varepsilon\rp^2\log^3\lp |z_1|^2+\varepsilon\rp}+\frac{1}{\lp |z_1|^2+\varepsilon\rp\log\lp |z_1|^2+\varepsilon\rp}\right].
\end{equation*}
From the expression of $\partial\bar\partial u^{\varepsilon}$, we see that $u^\varepsilon$ is plurisubharmonic. Since $u^{\varepsilon}$ converges decreasingly and locally uniformly to $u$, $u$ inherits this property. 

There exist $a\in (0,1)$ and $b>0$ such that $\Omega\subset\mathbb D_{a}\times\mathbb D_{b}$. Let $F_{\varepsilon}=\det(\partial\bar\partial u^{\varepsilon})-1$. Then we have
\begin{equation*}
\begin{split}
\int_{\Omega}\left|F_\varepsilon\right|\,dV_{\mathbb C^2}&\leq C\varepsilon\int_{\mathbb D_a}\frac{1}{\lp |z_1|^2+\varepsilon\rp^2\left[-\log\lp |z_1|^2+\varepsilon\rp\right]^3}\,dV_{\mathbb C}\\
&=C\varepsilon\int^{a^2+\varepsilon}_{\varepsilon}\frac{ds}{s^2(-\log s)^3},
\end{split}
\end{equation*}
where $C$ is a constant independent of $\varepsilon$. Applying L'H\^ospital's rule, we see that the above integral has order $\log^{-3}\varepsilon^{-1}$ as $\varepsilon\rightarrow 0$. Therefore, $F_{\varepsilon}\to 0$ in $L^1(\Omega)$ (in fact in $L^{1}(\log L^1)^p$ for any $p<3$). Hence, $u$ is a pluripotential solution to $\det(\partial\bar\partial u)=1$ on $\mathbb D\times\mathbb C$.

{\it 2.\,Verification of viscosity solution}

We directly prove that $u$ is a viscosity solution according to Definition \ref{vissol}. For $n$-th order Hermitian matrices, define
\begin{equation}\label{F}
F(A):=\begin{cases}
1-\det(A),\ \ &\text{if }A\ge 0;\\
+\infty,   &\text{otherwise.}
\end{cases}
\end{equation}
Based on the calculations for $u^\varepsilon$, we see that $u$ classically solves $\det(\partial\bar\partial u)=1$ on $\left(\mathbb D\setminus\{0\}\right)\times\mathbb C$. Therefore, we only need to check points in $\{0\} \times\mathbb C$, where $u$ takes value $0$. 

Suppose $p_0=(0,z^0_2)$ and $\varphi$ is a $C^2$ function defined in a neighborhood of $p_0$ such that $u-\varphi$ attains its local maximum. Then, for any $p=(z_1,z_2)$ in this neighborhood of $p_0$, we have
$$\varphi(z_1,z_2)\geq \varphi(0,z^0_2)-\frac{|\real z_2|^2}{\log |z_1|}+2|z_1|^2\left(1-\log |z_1|\right).$$
Taking $z_2=z^0_2$, we get
\begin{equation*}
\begin{split}
\varphi(z_1,z^0_2)&\geq \varphi(0,z^0_2)-\frac{|\real z^0_2|^2}{\log |z_1|}+2|z_1|^2\left(1-\log |z_1|\right)\\
&\geq\varphi(0,z^0_2)+ 2|z_1|^2\left(1-\log |z_1|\right).
\end{split}
\end{equation*}
However, this inequality contradicts the assumption that $\varphi\in C^2$. Therefore, there is no upper test function, which proves that $u$ is a viscosity subsolution.  

Suppose $p_0=(0,z^0_2)$ and $\varphi$ is a $C^2$ function defined in a neighborhood of $p_0$ such that $u-\varphi$ attains its local minimum. Then, for any $p=(z_1,z_2)$ in this neighborhood of $p_0$, we have
$$\varphi(z_1,z_2)\leq \varphi(0,z^0_2)-\frac{|\real z_2|^2}{\log |z_1|}+2|z_1|^2\left(1-\log |z_1|\right).$$
Letting $z_1\rightarrow 0$, we obtain
\begin{equation*}
\varphi(0,z_2)\leq \varphi(0,z^0_2).
\end{equation*}
This implies $\varphi_{2\bar 2}(p_0)\leq 0$. If $\pbp\varphi(p_0)\geq 0$, then $\pbp\varphi(p_0)$ has a zero eigenvalue. Thus $\det (\pbp\varphi(p_0))=0$ and $F(\pbp\varphi(p_0))=1$. Otherwise, $F(\pbp\varphi(p_0))=+\infty$. In either case, $F(\pbp\varphi(p_0))\geq 0$. This proves that $u$ is a viscosity supersolution.  

{\it 3.\,Regularity of $u$.} 

Clearly, the modulus of continuity $\omega(r)$ has the order $\log^{-1} r$ as $r\rightarrow 0$. Since $r^{-1}\log^{-1} r$ is not integrable near $0$, $u$ is not Dini continuous.  

A simple calculation shows that $|\nabla u |$ has the order $|z_1|^{-1}\log^{-2}|z_1|$ and  $|\nabla^2 u |$ has the order $|z_1|^{-2}\log^{-2}|z_1|$ as $z_1\rightarrow 0$. This implies that $u$ lies in $W^{1,2}_{loc}\cap W^{2,1}_{loc}$ but not in $W^{1,p}_{loc}$ for any $p>2$ or $W^{2,q}_{loc}$ for any $q>1$. 

\end{proof}

A higher-dimensional generalization of Example \ref{exa1} can be constructed as follows. 
Firstly, consider the following singular ODE:
\begin{align*}
\begin{cases}
rh_n''+h_n'=(-2)^{n+1}r\log^{n-1}r,\ \   r>0.  \\
h_n(0)=0.
\end{cases}
\end{align*}
The solution exists in $[0,1)$ and has the explicit form 
$$h_n(r)=r^2\left(a_{n,n-1}\log^{n-1}r+a_{n,n-2}\log^{n-2}r+\cdots+a_{n,1}\log r+a_{n,0}\right),$$
where $$a_{n,k}=(-2)^{k}\left(\frac{n!}{k!}-\frac{(n-1)!}{(k-1)!}\right)\,\left(k\geq 1\right),\quad a_{n,0}=n!.$$
In particular, $h_2(r)=-2r^2\lp \log r-1\rp$, $h_3(r)=2r^2(2\log^2r-4\log r+3)$. 
As
\begin{equation*}
h_n'(r)=\frac{2^{n+1}}{r}\int^r_{0}s\left(-\log s\right)^{n-1}\,ds>0,
\end{equation*}
$h_n$ monotonically increases in $[0,1)$. For $z=(z_1,z_2,\cdots,z_n)\in \bC^{n}$, write $z'=(z_2,\cdots,z_n)$ and $\real z'=\lp \real z_2,\cdots,\real z_n\rp$. The higher-dimensional version of Example \ref{exa1} defined on $\mathbb D\times\mathbb C^{n-1}$ is the following: 
\begin{ex}
\begin{equation*}
u(z)=-\frac{\left|\real z'\right|^2}{\log |z_1|}+h_n(|z_1|).
\end{equation*}
\end{ex}

This higher-dimensional solution has a similar singularity profile to the $2$-dimensional one. To verify that it is a pluripotential solution, we use approximation
$$u^{\varepsilon}(z)=-\frac{2|\real z'|^2}{\log \lp |z_1|^2+\varepsilon\rp }+h_n\big(\sqrt{|z_1|^2+\varepsilon}\big).$$ Since $h_n$ monotonically increases in $[0,1)$, $u^{\varepsilon}$ decreases to $u$ as $\varepsilon\rightarrow 0$. With similar calculations as in the $2$-dimensional case, we obtain 
\begin{equation*}
\begin{split}
\det\lp\pbp u^{\varepsilon}\rp=&1-\frac{\varepsilon}{|z_1|^2+\varepsilon}+\frac{2\varepsilon|\real z'|^2}{\lp |z_1|^2+\varepsilon\rp^2\lf-\log\lp |z_1|^2+\varepsilon\rp\rf^{n+1}}\\
 &+\frac{\varepsilon h'_n}{2(|z_1|^2+\varepsilon)^\frac{3}{2}\lf-\log\lp |z_1|^2+\varepsilon\rp\rf^{n-1}}.
\end{split}
\end{equation*}
As $h'_n(r)$ has the order $r\log^{n-1} r$ as $r\rightarrow 0$, $\det\lp\pbp u^{\varepsilon}\rp$ converges to $1$ in the locally $L^{1}(\log L^1)^p$ sense for any $p<n+1$. 

The proof that $u$ is a viscosity solution is almost the same as in the $2$-dimensional case. The key point is that $u$ attains its global infimum at the singular set $\{0\}\times\mathbb C^{n-1}$ and grows faster than any $C^2$ function away from the singular set.  

\section{Two families of solutions with continuous H\"older exponents}\label{sec4}

In this section, we prove Theorem \ref{main2} through the following higher-dimensional generalizations of Examples \ref{exa2} and \ref{exa3}. 
\begin{ex}\label{exa-2}
\begin{equation*}
v(z)=\frac{c_n}{(1-\beta)^{\frac{2}{n}}}|z'|^{2-\frac{2}{n}}\left(|z_1|^{\beta}+|z_1|^{2-\beta}\right)^\frac{2}{n},\quad  c_n=\frac{2^{-\frac{1}{n}}n^\frac{n+1}{n}}{n-1},\ \beta\in [0,1).
\end{equation*}
\end{ex}
\begin{ex}\label{exa-3}
\begin{equation*}
w(z)=|z_1|^{\frac{\gamma}{n-1}}|z'|^2+\frac{4}{(2-\gamma)^2}|z_1|^{2-\gamma}, \quad\gamma\in [0,2).
\end{equation*}	
\end{ex}

\begin{proof}[Proof of Theorem \ref{main2} by Example \ref{exa-2}.] The regularity of $v$ is straightforward to verify, so we will only confirm that $v$ is both a pluripotential and viscosity solution. 

{\it 1.\,Verification of pluripotential solution.} 

We prove this by approximation. For $\eps>0$, consider 
\begin{equation*}
v^{\eps}(z)=KG(|z'|^2+\varepsilon)H(|z_1|^2+\varepsilon),
\end{equation*}
where $K$ denotes $c_n(1-\beta)^{-\frac{2}{n}}$, 
$G(r)=r^\frac{n-1}{n}$, and $H(r)=(r^{\frac{\beta}{2}}+r^{1-\frac{\beta}{2}})^\frac{2}{n}.$ In the following, let subscripts $i$ and $j$ run from $2$ to $n$. Through direct computations, we have 
\begin{equation*}
v^{\eps}_1=KGH'\bar z_1,  \quad
v^{\eps}_i=KG'H\bar z_i;
\end{equation*}
\begin{equation*}
v^{\eps}_{1\bar 1}=KG\left(H''|z_1|^2+H'\right),\quad\  v^{\eps}_{i\bar 1}=KG'H'z_1\bar z_i ,\quad\  v^{\eps}_{i\bar j}=KH\left(G'\delta_{ij}+G''\bar z_i z_j\right).
\end{equation*}
It follows that
\begin{equation*}
\begin{split}
\det\lp \pbp v^{\eps}\rp=&K^n(HG')^{n-2}\big\{GG'H\left(H''|z_1|^2+H'\right)\\
&+\left[GG''H(H''|z_1|^2+H')-G'^2H'^2|z_1|^2\right]|z'|^2\big\}.
\end{split}
\end{equation*}
Substituting the expressions of $K$, $G$, and $H$, we finally obtain
\begin{equation*}
\begin{split}
\det\lp \pbp v^{\eps}\rp=&1+\frac{\varepsilon}{2(1-\beta)^2}\left[\beta Q^{\beta-2}+(2-\beta)Q^{-\beta}+2\beta(2-\beta)Q^{-1}\right]\\
&+\frac{\varepsilon}{2(n-1)(1-\beta)^2}\left[2+\beta^2 Q^{\beta-1}+(2-\beta)^2Q^{1-\beta}\right](|z'|^2+\eps)^{-1}\\
&+\frac{\varepsilon^2}{2(n-1)(1-\beta)}\left[\beta Q^{\beta-2}-(2-\beta)Q^{-\beta}\right](|z'|^2+\eps)^{-1},
\end{split}
\end{equation*}
where $Q=|z_1|^2+\varepsilon$. It is not hard to see that $\det\lp \pbp v^{\eps}\rp-1$ is dominated by  
$$\frac{\varepsilon}{(|z_1|^2+\varepsilon)^{2-\beta}}+\frac{\varepsilon}{(|z_1|^2+\varepsilon)^{1-\beta}(|z'|^2+\eps)}$$
when $|z_1|$ or $|z'|$ is small. Hence $\det(\partial\bar\partial v^\eps)\to 1$ in $L^{1+\delta}_{loc}$ sense for any $\delta<\frac{\beta}{1-\beta}$. 
Since $v^{\eps}$ converges decreasingly to $v$ in $L^{\infty}_{loc}(\bC^n)$, by Lemma \ref{convergence theorem}, we conclude that $(dd^c v)^n=\lim\limits_{\varepsilon\to 0}(dd^c v^{\eps})^n=4^nn!dV_{\bC^n}$. 

{\it 2.\,Verification of viscosity solution.} 

Based on the calculations for $v^\varepsilon$, we see that $v$ classically solves $\det(\partial\bar\partial v)=1$ on on $\bC^n\setminus \ls (z_1,z')\in\mathbb C^n\,\big|\,z_1=0\text{\ or\ }z'=0\rs$. Therefore, we only need to check points in $\{0\} \times\mathbb C^{n-1}$ and $\mathbb C\times \{0\}^{n-1}$, where $v$ takes value $0$. These two cases are very similar, so we only check the points in $\{0\} \times\mathbb C^{n-1}$.  

Suppose $p_0=(0,z'^0)$ and $\varphi$ is a $C^2$ function defined in a neighborhood of $p_0$ such that $u-\varphi$ attains its local maximum. Then, for any $p=(z_1,z')$ in this neighborhood of $p_0$, we have
$$\varphi(z_1,z')\ge \varphi(0,z'^0)+ \frac{c_n}{(1-\beta)^{\frac{2}{n}}}|z'|^{2-\frac{2}{n}}\left(|z_1|^{\beta}+|z_1|^{2-\beta}\right)^\frac{2}{n}.$$
If $z'^0\neq 0$, taking $z'=z'^0$, we get 
$$\varphi(z_1,z'^0)\ge \varphi(0,z'^0)+ \frac{c_n}{(1-\beta)^{\frac{2}{n}}}|z'^0|^{2-\frac{2}{n}}\left(|z_1|^{\beta}+|z_1|^{2-\beta}\right)^\frac{2}{n}.$$
If $z'^0=0$, we have 
$$\varphi(z_1,z')\ge \varphi(0,0)+ \frac{c_n}{(1-\beta)^{\frac{2}{n}}}|z'|^{2-\frac{2}{n}}\left(|z_1|^{\beta}+|z_1|^{2-\beta}\right)^\frac{2}{n}.$$
Either case contradicts the $C^2$ regularity of $\varphi$. Therefore, there is no upper test function, which proves that $u$ is a viscosity subsolution. 

Now, suppose $p_0=(0,z^0_2)$ and $\varphi$ is a $C^2$ function defined in a neighborhood of $p_0$ such that $u-\varphi$ attains its local minimum. Then, for any $p=(z_1,z')$ in this neighborhood of $p_0$, we have
$$\varphi(z_1,z')-\varphi(0,z'^0)\le \frac{c_n}{(1-\beta)^{\frac{2}{n}}}|z'|^{2-\frac{2}{n}}\left(|z_1|^{\beta}+|z_1|^{2-\beta}\right)^\frac{2}{n}.$$
Taking $z_1=0$, we get
\begin{equation*}
\varphi(0,z')\leq \varphi(0,z'^0).
\end{equation*}
This implies that the $(n-1)$-order submatrix $\p_{z'}\p_{\bar z'}\varphi(p_0)$ is semi-negative. If $\pbp\varphi(p_0)\geq 0$, then $\pbp\varphi(p_0)$ has zero eigenvalues. Thus $\det (\pbp\varphi(p_0))=0$ and $F(\pbp\varphi(p_0))=1$, where $F$ is defined in \eqref{F}. Otherwise, $F(\pbp\varphi(p_0))=+\infty$. In either case, we have $F(\pbp\varphi(p_0))\geq 0$. This proves that $u$ is a viscosity supersolution.  
\end{proof}

\begin{proof}[Verification of Example \ref{exa-3}.] The verification that $w$ is a viscosity solution is very similar to Examples \ref{exa1} and \ref{exa-3}, so we only check that $w$ is a pluripotential solution. 

As in the proofs for the previous two examples, we use approximation to establish this. For $\eps>0$, let
$$w^{\eps}=\lp |z_1|^2+\eps\rp^{\frac{\gamma}{2(n-1)}}|z'|^2+\frac{4}{(2-\gamma)^2}\lp |z_1|^2+\eps\rp^{\frac{2-\gamma}{2}}.$$
Clearly, $w^{\varepsilon}$ converges decreasingly and locally uniformly to $w$ as $\varepsilon\to 0$. Let subscripts $i$ and $j$ run from $2$ to $n$. By direct computations, we have
\begin{equation*}
\begin{split}
w^{\eps}_{1\bar 1}=&\frac{\gamma^2}{4(n-1)^2}|z'|^2|z_1|^2\lp |z_1|^2+\eps\rp^{\frac{\gamma}{2(n-1)}-2}+\frac{\gamma\varepsilon}{2(n-1)}|z'|^2\lp |z_1|^2+\eps\rp^{\frac{\gamma}{2(n-1)}-2}  \\
&+\lp |z_1|^2+\eps\rp^{-\frac{\gamma}{2}} +\frac{\gamma\varepsilon}{2-\gamma}|z_1|^2\lp |z_1|^2+\eps\rp^{-\frac{\gamma}{2}-1},
\end{split}
\end{equation*}
\begin{equation*}
 w^{\eps}_{1\bar i}=\frac{\gamma}{2(n-1)}\lp |z_1|^2+\eps\rp^{\frac{\gamma}{2(n-1)}-1}\bar z_1 z_i,\quad\ w^{\eps}_{i\bar j}= \lp |z_1|^2+\eps\rp^{\frac{\gamma}{2(n-1)}}\delta_{ij}.
\end{equation*}
It follows that
\begin{equation*}
\begin{split}
\det\left(\partial\bar\partial w^\varepsilon\right)=&1+\eps\left[ \frac{\gamma |z'|^2}{2(n-1)}\lp |z_1|^2+\eps\rp^{\frac{n\gamma}{2(n-1)}-2}+\frac{\gamma |z_1|^2}{2-\gamma}\lp |z_1|^2+\eps\rp^{-1}\right].
\end{split}
\end{equation*}
Hence, $\det\left(\partial\bar\partial w^\varepsilon\right)\rightarrow 1$ in $L^{1+\delta}_{loc}(\mathbb C^n)$ for any $0<\delta< \frac{n\gamma}{ 2n-2-\gamma n}$ when $\gamma<\frac{2(n-1)}{n}$ and in $L^{p}_{loc}(\bC^n)$ for any $p>0$ when $\gamma\geq \frac{2(n-1)}{n}$. 
Since $w^{\eps}$ converges decreasingly to $w$ in $L^{\infty}_{loc}(\bC^n)$, we conclude that $(dd^c w)^n=\lim\limits_{\varepsilon\to 0}(dd^c w^{\eps})^n=4^nn!dV_{\bC^n}$. 

\end{proof}

\vskip 5pt

{\it Acknowledgement.} The authors would like to thank Weiming Shen, Yuguang Shi, Xujia Wang, Yu Yuan, Bin Zhou, and Xiaohua Zhu for helpful discussions.

\vskip 5pt

\end{document}